\documentclass[11pt,reqno]{amsart}
\usepackage{longtable}

\usepackage{tikz}
\usepackage{amsmath,amsthm,amsfonts,amssymb,latexsym,mathrsfs,color,extarrows}

\usepackage{subcaption}
\usetikzlibrary{arrows.meta}

\newtheorem{Theorem}{Theorem}[section]

\newtheorem{Proposition}[Theorem]{Proposition}

\newtheorem{Lemma}[Theorem]{Lemma}
\newtheorem{Definition}[Theorem]{Definition}
\newtheorem{Example}[Theorem]{Example}

\allowdisplaybreaks

\DeclareMathOperator{\Sym}{\mathfrak{S}}
 
\DeclareMathOperator{\des}{des}

\DeclareMathOperator{\inv}{inv}

\DeclareMathOperator{\maj}{maj}
\DeclareMathOperator{\Des}{Des}

\numberwithin{equation}{section}

\usepackage[backref=page]{hyperref}

\usepackage{hyperref}  
\hypersetup{
    colorlinks=true,
    urlcolor=black
}
\makeatletter
\def\thm@space@setup{%
  \thm@preskip=8pt \thm@postskip=8pt 
}
\makeatother

\def\RS{\mathop{\rm RS}\nolimits}
\def\AndI{\mathop{\rm And}\nolimits^{I}}  
\def\AndII{\mathop{\rm And}\nolimits^{I\!I}} 

\newcommand{\qbinom}[2]{\left[ \genfrac{}{}{0pt}{}{#1}{#2} \right]_q}

\title[On the distributions of the statistics (des, maj, inv)]{On the distributions of the statistics (des, maj, inv) over several classes of permutations}

\author{Guo-Niu Han}
\address{I.R.M.A., UMR 7501, Universit\'e de Strasbourg et CNRS, 7 rue
Ren\'e Descartes, F-67084 Strasbourg, France}
\email{guoniu.han@unistra.fr}

\author{Kathy Q. Ji}
\address{ Center for Applied Mathematics and KL-AAGDM,
Tianjin University,
Tianjin 300072, P.R. China
}
\email{kathyji@tju.edu.cn}

\author{Huan Xiong}
\address{
 Institute for Advanced Study in Mathematics, 
   Harbin Institute of Technology,
   Heilongjiang 150001, P.R. China
}
\email{huan.xiong.math@gmail.com}

\makeatletter
\@namedef{subjclassname@2020}{%
	\textup{2020} Mathematics Subject Classification}
\makeatother

\date{2026/06/03}
	\subjclass[2020]{05A05, 05A15, 05A19, 05A30}
	\keywords{Permutation statistics; Andr\'e permutations; Simsun permutations; $q$-series; Recurrence relations.}
    
\begin{document}

	\begin{abstract}
    We investigate the joint distribution of the trivariate statistics $(\des, \maj, \inv)$ on classical permutations, André permutations of the first and second kinds, and Simsun permutations. By decomposing permutations according to the position of the smallest element, we obtain explicit recurrence relations for the generating functions of these statistics.  In the classical permutation setting, our recurrence relation yields the generating function for the trivariate statistics $(\des, \maj, \inv)$ due to Gessel, which is typically proved using MacMahon’s technique.

	\end{abstract}

\maketitle

\section{Introduction}

The study of permutation statistics is a popular topic in enumerative combinatorics. Since the pioneering work of MacMahon \cite{macmahon1913indices}, the distribution of statistics such as the descent number ($\des$), major index ($\maj$), and inversion number ($\inv$) has been extensively investigated \cite{Foata-Sch-1970, Garsia1979, gessel1977generating}.

Let $\Sym_n$ denote the symmetric group on $\{1,2,\dots,n\}$. For a permutation $\sigma = \sigma_1\sigma_2\cdots\sigma_n\in\Sym_n$, the \textit{inversion number} is defined by
\begin{align*}
\inv(\sigma) &= \#\bigl\{(i,j)\mid 1 \le i < j \le n,\ \sigma_i > \sigma_j\bigr\},
\end{align*}
where the symbol $\#$ stands for the cardinality of a set.

The descent set of $\sigma$, denoted by $\Des(\sigma)$, is defined by
\begin{equation*}
\Des(\sigma) = \bigl\{i \mid 1 \le i \le n-1,\ \sigma_i > \sigma_{i+1}\bigr\},
\end{equation*}
and the \textit{descent number} $\des(\sigma)$ satisfies
\begin{equation*}
\des(\sigma) = \#\Des(\sigma).
\end{equation*}
The \textit{major index} $\maj(\sigma)$ is defined as the sum of its descent positions, namely
\begin{equation*}
\maj(\sigma)=\sum_{i \in \Des(\sigma)} i.
\end{equation*}

The bivariate generating function 
\begin{equation*}
A_n^{\des,\inv}(t,q)=\sum_{\sigma\in\Sym_n}t^{\des(\sigma)+1}q^{\inv(\sigma)},
\end{equation*} 
which records descents and inversions, has a well-known exponential generating function due to Stanley \cite{Stanley-1976}: 
\[
\sum_{n\geq0}A_n^{\des,\inv}(t,q)\frac{x^n}{[n]_q!}=\frac{1-t}{1-te(x(1-t);q)},
\]
where
\begin{align}
 e(z;q)
 &= \sum_{n\ge 0} \frac{z^n}{[n]_q!}.  
\end{align}
Here and in the sequel, for a positive integer $n$, we define
\[
[n]_q := \frac{1-q^n}{1-q} = 1+q+\cdots+q^{n-1}
\]
and for $n \ge 1$,
\[
[n]_q! := [1]_q[2]_q\cdots[n]_q.
\]
Assume that $[0]_q! = 1$.

Note that in the definition of $A_n^{\des,\inv}(t,q)$, 
the power of $t$ is 
$\des(\sigma)+1$, while some articles may adopt $\des(\sigma)$. 
This elegant formula unified the Eulerian and Mahonian statistics in a single generating series. {Stanley's proof} used binomial posets and Möbius inversion, establishing a foundational link between poset theory and permutation enumeration. {Garsia \cite{Garsia1979} and {Gessel \cite{gessel1977generating}} later provided alternative proofs. Subsequently, {D\'esarm\'enien and Foata \cite{desarmenien1992signed} observed that the right-hand side of Stanley's identity can be written as 
\[
\biggl(1-t\sum_{n\geq1}(1-t)^{n-1}\frac{x^n}{[n]_q!}\biggr)^{-1},
\]
from which they derived a ``semi'' \(q\)-recurrence: For $n\geq 1$
\begin{equation}\label{desarfoarec}
A_n^{\des,\inv}(t,q)=t(1-t)^{n-1}+\sum_{i=1}^{n-1}
\qbinom{n}{i} A_i^{\des,\inv}(t,q)t(1-t)^{n-1-i},
\end{equation}
where  the $q$-binomial coefficient (also called the Gaussian polynomial) is defined by
\[
 {n  \brack i}_q =
\begin{cases}
\dfrac{(1 - q^{n})(1 - q^{n-1}) \cdots (1 - q^{n-i+1})}{(1 - q^i)(1 - q^{i-1}) \cdots (1 - q)}, & \text{for } 0\leq i\leq n , \\[1em]
0. & \text{otherwise},
\end{cases}
\]
see Andrews \cite[Chapter~1]{Andrews-1976}. 

Note that the summand in \eqref{desarfoarec} mixes \(q\)-dependent and \(q\)-independent factors. A ``fully'' \(q\)-recurrence---in which every term depends explicitly on \(q\)---was later given by Chow \cite{chow2010recurrence}:
\begin{align}\label{eq: chow}
A_{n+1}^{\des,\inv}(t,q)=(1+tq^n)A_n^{\des,\inv}(t,q)+\sum_{k=1}^{n-1}\qbinom{n}{k} q^k A_{n-k}^{\des,\inv}(t,q)A_k^{\des,\inv}(t,q), 
\end{align}
which reduces to the classical convolution recurrence for Eulerian polynomials when \(q=1\).

Among various permutation classes, Andr\'e permutations hold particular significance due to their intimate connection with Euler numbers. The Euler numbers $E_n$, which appear in the Taylor expansions of $\sec(x) + \tan(x)$, were shown by Andr\'e in the late 19th century to enumerate alternating permutations \cite{andre1881permutations}. Andr\'e permutations, introduced by Foata and 
Schützenberger~\cite{Foata-Sch-1970} and further studied by Strehl \cite{Str74} and Foata and Strehl \cite{FSt74, FSt76}, provide additional combinatorial interpretations for these numbers.

For a permutation $\sigma = \sigma_1\sigma_2\cdots\sigma_n$ with $n \geq 2$, consider its decomposition $\sigma = \tau\min(\sigma)\tau'$, where $\min(\sigma)$ is the minimum element. Then $\sigma$ is called an \textit{Andr\'e permutation of the first kind} if both $\tau$ and $\tau'$ are Andr\'e permutations of the first kind, and the maximum element of the concatenated subword $\tau\tau'$ lies in $\tau'$. Similarly, $\sigma$ is an \textit{Andr\'e permutation of the second kind} if both $\tau$ and $\tau'$ are Andr\'e permutations of the second kind, and the minimum element of $\tau\tau'$ lies in $\tau'$. The empty permutation and single-element permutations are defined to be both types of Andr\'e permutations. Denoting by $\AndI_n$ and $\AndII_n$ the sets of Andr\'e permutations of the first and second kind on $[n] = \{1,2,\ldots,n\}$ respectively, it is well known that $E_n = |\AndI_n| = |\AndII_n|$.

Another important class is that of Simsun permutations, introduced by Simion and Sundaram in their study of homology representations of the symmetric group \cite{sundaram1995homology,sundaram1996plethysm}. A permutation $\sigma = \sigma_1\sigma_2\ldots\sigma_n$ on $[n]$ is called a \textit{Simsun permutation} if it contains no double descents, and this property persists after successively removing the largest remaining elements $n, n-1, \ldots, 2, 1$ in order. For example, it is easy to see that $\sigma= 2147365$ is a Simsun permutation since $2147365$, $214365$, $21435$, $2143$, $213$, $21$, $1$ have no double descents. Remarkably, the number of Simsun permutations on $[n]$, denoted $\RS_n$, also equals the Euler number $E_{n+1}$. The notation $\RS_n$ was first adopted by Chow and Shiu \cite{chow2011counting}, who established a bijection between Andr\'e permutations of the first kind and Simsun permutations, showing that the descent statistic is equidistributed over these two classes:

\[
\sum_{\sigma \in \AndI_n} t^{\mathrm{des}(\sigma)} = \sum_{\sigma \in \RS_{n-1}} t^{\mathrm{des}(\sigma)}.
\]

Beyond their enumerative significance, Andr\'e permutations and Simsun permutations play crucial roles in understanding the $cd$-indices of simplicial Eulerian posets \cite{bayer2019cd, He96,  
purtill1993andre,stanley1994flag}. Their properties have been further explored by Barnabei et al. \cite{barnabei2020permutations}, Chow and Shiu \cite{chow2011counting}, Deutsch and  Elizalde~\cite{deutsch2012restricted}, Disanto \cite{Di14}, and Foata and Han \cite{FH01}, among others.

\medskip

In this paper, we extend the study of permutation statistics to the trivariate setting for these important permutation classes. 
We investigate the joint distribution of the statistics $(\mathrm{des}, \mathrm{maj}, \mathrm{inv})$ for classical permutations, Andr\'e permutations of the first and second kind, and Simsun permutations. More precisely, 
\begin{align}
A_n(t,q,p)&= \sum_{\sigma\in \Sym_n} t^{\mathrm{des}(\sigma)} q^{\mathrm{inv}(\sigma)} p^{\mathrm{maj}(\sigma)},\\[5pt]
A_n^I(t,q,p)&= \sum_{\sigma\in \AndI_n} t^{\mathrm{des}(\sigma)} q^{\mathrm{inv}(\sigma)} p^{\mathrm{maj}(\sigma)},\\[5pt]
A_n^{I\!I}(t,q,p)&= \sum_{\sigma\in \AndII_n} t^{\mathrm{des}(\sigma)} q^{\mathrm{inv}(\sigma)} p^{\mathrm{maj}(\sigma)},\\[5pt]
A_n^{RS}(t,q,p)&= \sum_{\sigma\in \RS_n} t^{\mathrm{des}(\sigma)} q^{\mathrm{inv}(\sigma)} p^{\mathrm{maj}(\sigma)}. 
\end{align}
By utilizing decomposition techniques based on the position of the smallest element, we derive exact recurrence relations for the generating functions of these statistics.

\begin{Theorem} \label{thm:classical_rec} We have 
\begin{itemize}
\item[ (a).]  $A_0(t,q,p) = A_1(t,q,p) = 1$.  For $n \ge 2$,
\begin{equation} \label{eq:classical_rec}
A_n(t,q,p) = A_{n-1}(tp, q, p) + t \sum_{j=1}^{n-1} (pq)^j \qbinom{n-1}{j} A_j(t,q,p) A_{n-1-j}(tp^{j+1}, q, p).
\end{equation}

\item[ (b).] $A^I_0(t,q,p) = A^I_1(t,q,p) = 1$.  For $n\geq 2$, 
\begin{equation}  \label{eq:AndreI}
A_n^I(t,q,p) = A_{n-1}^I(tp, q, p) + t \sum_{j=1}^{n-2} (pq)^j \qbinom{n-2}{j} A_j^I(t,q,p) A_{n-1-j}^I(tp^{j+1}, q, p).
\end{equation}

\item[ (c).] $A^{I\!I}_0(t,q,p) = A^{I\!I}_1(t,q,p) = 1$.  For $n\geq 2$, 
\begin{equation} \label{eq:AndreII}
A_n^{II}(t,q,p) = A_{n-1}^{II}(tp, q, p) + t \sum_{j=1}^{n-2} (pq^2)^j \qbinom{n-2}{j} A_j^{II}(t,q,p) A_{n-1-j}^{II}(tp^{j+1}, q, p).
\end{equation}

\item[ (d).] $A^{RS}_0(t,q,p) = A^{RS}_1(t,q,p) = 1$.  For $n\geq 2$, 
\begin{equation} \label{defi:simsun}
A_n^{RS}(t,q,p) = A_{n-1}^{RS}(tp, q, p) + t \sum_{j=1}^{n-1} (pq)^j \qbinom{n-1}{j} A_j^{II}(t,q,p) A_{n-1-j}^{RS}(tp^{j+1}, q, p).
\end{equation}  
\end{itemize}
\end{Theorem}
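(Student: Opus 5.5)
The approach is the decomposition at the minimum letter, $\sigma=\tau\,1\,\tau'$, with $|\tau|=j$ and $|\tau'|=n-1-j$. We track how each statistic splits, and the bookkeeping is the same in all four cases.

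\emph{Descents and major index.} Descents inside $\tau$ keep their positions, so they contribute $t^{\des(\tau)}p^{\maj(\tau)}$. If $j\ge1$, position $j$ is a descent, since $\tau_j>1$; this gives a factor $tp^j$. Position $j+1$ is always an ascent. Every descent of $\tau'$ is shifted by $j+1$, so $\tau'$ contributes $(tp^{j+1})^{\des(\tau')}p^{\maj(\tau')}$. This explains the argument $tp^{j+1}$ in the second factor, and $tp$ in the $j=0$ term.

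\emph{Inversions.} We have $\inv(\sigma)=\inv(\tau)+\inv(\tau')+j+c$, where $j$ counts the pairs (letter of $\tau$, the letter $1$), and $c$ counts the pairs $(a,b)$ with $a\in\tau$, $b\in\tau'$, $a>b$. All statistics depend only on relative order, so we standardize $\tau$ and $\tau'$. Summing $q^c$ over the choices of value sets, with the standardized words fixed, gives the Gaussian coefficient by the standard inversion interpretation $\qbinom{m}{j}=\sum q^{\inv(w)}$ over $0$-$1$ words with $j$ ones.

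For (a), $\tau$ and $\tau'$ are arbitrary and the values $\{2,\dots,n\}$ are split freely. This gives $\qbinom{n-1}{j}$ and the factor $(pq)^j t$ for $1\le j\le n-1$.

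For (b), we use the recursive definition. The letter $n$ must lie in $\tau'$, so $j\le n-2$. Since $n$ exceeds everything in $\tau$, it creates no cross inversions. The remaining $n-2$ values are split freely, giving $\qbinom{n-2}{j}$. Both parts are André of the first kind after standardization.

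For (c), the letter $2$ must lie in $\tau'$, so again $j\le n-2$. Now $2$ is smaller than every letter of $\tau$, producing exactly $j$ extra cross inversions. This accounts for $(pq^2)^j$, and the other $n-2$ values give $\qbinom{n-2}{j}$.

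For (d), the structural input is a characterization of Simsun permutations through this decomposition. Removing the largest letters commutes with the split at $1$, since $1$ is never removed before the end. Moreover, a double descent through the letter $1$ can only be of the form $\tau_{j-1}>\tau_j>1$, because $1<\tau'_1$. Hence $\sigma=\tau1\tau'$ is Simsun if and only if $\tau'$ is Simsun and the word $\tau0$ is Simsun. Here ``$\tau0$ is Simsun'' means that each restriction of $\tau$ to its $k$ smallest values has no double descent and does not end in a descent. The value sets are again split freely among $\{2,\dots,n\}$, which gives $\qbinom{n-1}{j}$ and $(pq)^j$.

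It then remains to prove the following lemma, which I expect to be the main obstacle.

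\begin{Lemma}
A word $w$ of length $j$ satisfies ``$w0$ is Simsun'' if and only if $\operatorname{st}(w)\in\AndII_j$.
\end{Lemma}

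I have checked this by hand for $j\le3$; for $j=3$ both sides equal $\{123,312\}$. I would prove it by induction on $j$, decomposing $w=u\,m\,u'$ at its minimum $m$.

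By the same observation about removals, $w0$ is Simsun if and only if $u0$ is Simsun and $u'0$ is Simsun. The second condition is there because $m$ now plays the role of $0$ for $u'$. By induction, these are exactly the conditions that $u$ and $u'$ are André of the second kind.

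The remaining condition, when $u'$ is nonempty, is that $\min(uu')$ lies in $u'$. To obtain it, remove all letters larger than $\min(uu')$. If that minimum lay in $u$, the remaining word would be $\min(uu')\,m\,0$, a double descent. Conversely, if the minimum lies in $u'$, one checks that no new double descent is created at $m$ in any restriction. The reason is that in every restriction in which $m$ is preceded by a letter, $m$ is also followed by a larger letter, namely the surviving minimum of $u'$.

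Given the lemma, the statistics of $\tau$ are distributed by $A^{I\!I}_j$, which yields (d).
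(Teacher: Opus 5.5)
Your proposal is correct and follows essentially the same route as the paper. The shared ingredients are the decomposition at the letter $1$ and the inversion count via $0$-$1$ words; your cross-inversion arguments for $n\in\tau'$ and $2\in\tau'$ are exactly the paper's identities for $\mathcal{M}_{1n}$ and $\mathcal{M}_{12}$. For (d), your condition ``$\tau0$ is Simsun'' is the paper's property (P), proved equivalent to $\AndII$ by the same induction at the minimum, with your ``remaining condition'' playing the role of the paper's condition (iii).

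One small wording fix. $w0$ Simsun is equivalent to $u0$ and $u'0$ Simsun \emph{together with} that remaining condition; $w=21$ shows the first two alone do not suffice. Also, it is for $u$, not $u'$, that $m$ plays the role of $0$.
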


Note that the $p=1$ case in \eqref{eq:classical_rec} is equivalent to \eqref{eq: chow}. Thus, our result is a generalization of Chow's result from the bivariate generating function to the trivariate case. The summation in \eqref{defi:simsun} involves $A_j^{II}$ (Andr\'e II) and $A_{n-1-j}^{RS}$ (Simsun).

 \medskip 

 In the classical permutation setting, our recurrence relation yields the generating function for the trivariate statistics $(\des, \maj, \inv)$ due to Gessel, which is typically proved using MacMahon’s technique \cite{gessel1977generating}.

\begin{Theorem}[Gessel \cite{gessel1977generating}] \label{thm:qpPoly:Sn2}
The following $q$-series identity for $A_n(t,q,p)$ holds:
\begin{equation}\label{eqn:qpPoly:Sn2}
    \sum_{n\geq 0} A_n(t,q,p) \frac{u^n}{(q;q)_n (t;p)_{n+1}} 
    = \sum_{m\geq 0} \frac{t^m}{ (u;q)_{\infty}(up;q)_{\infty}\cdots (up^m;q)_{\infty}}.      
\end{equation}
\end{Theorem}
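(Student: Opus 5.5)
Our plan is to derive \eqref{eqn:qpPoly:Sn2} from the recurrence \eqref{eq:classical_rec} of Theorem~\ref{thm:classical_rec}(a). We expand the right-hand side in powers of $u$, define the resulting coefficients, and show that they satisfy the same recurrence as $A_n(t,q,p)$. Uniqueness of the solution then gives the identity.

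\textbf{Step 1: expand the right-hand side.} By Euler's identity,
\[
\frac{1}{(up^i;q)_\infty}=\sum_{k\ge0}\frac{u^kp^{ik}}{(q;q)_k}.
\]
Hence the coefficient of $u^n/(q;q)_n$ in $\prod_{i=0}^m (up^i;q)_\infty^{-1}$ is
\[
h_{n,m}(q,p)=\sum_{k_0+\cdots+k_m=n}\qbinom{n}{k_0,\dots,k_m}p^{k_1+2k_2+\cdots+mk_m}.
\]
Define
\[
B_n(t,q,p)=(t;p)_{n+1}\sum_{m\ge0}t^m h_{n,m}(q,p).
\]
The claim becomes $B_n=A_n$. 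The check $B_0=B_1=1$ is direct: $h_{0,m}=1$, and $h_{1,m}=1+p+\cdots+p^m$, whose $t$-sum is $1/((1-t)(1-tp))$.

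\textbf{Step 2: a recurrence for $h_{n,m}$.} We split off the variable $u$ attached to the smallest element. Writing $F_m(u)=\prod_{i=0}^m(up^i;q)_\infty^{-1}$, we use the $q$-difference relation $F_m(u)-F_m(uq)$, together with the identity
\[
F_m(u)=\frac{1}{(u;q)_\infty}F_{m-1}(up),
\]
to get a recursion expressing $h_{n,m}$ through $h_{j,m}$ and $h_{n-1-j,\cdot}$ evaluated at $p$-shifted arguments. Concretely, we expect to use the $q$-binomial splitting of the multinomial
\[
\qbinom{n}{k_0,\dots,k_m}=\qbinom{n}{k_0}\qbinom{n-k_0}{k_1,\dots,k_m}
\]
combined with $\qbinom{n}{j}=\qbinom{n-1}{j}+q^{n-j}\qbinom{n-1}{j-1}$ style Pascal relations.

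Translating to the generating function $G_n(t)=\sum_m t^m h_{n,m}$, the substitution $t\mapsto tp$ corresponds to shifting $m$ and multiplying the $u$ variables by $p$. The factors $(t;p)_{n+1}$ then convert as
\[
(t;p)_{n+1}=(1-t)\,(tp;p)_n,
\qquad
(t;p)_{j+1}\,(tp^{j+1};p)_{n-j}=(t;p)_{n+1}.
\]
This last factorization is exactly what makes the product $A_j(t,q,p)\,A_{n-1-j}(tp^{j+1},q,p)$ in \eqref{eq:classical_rec} natural. The term $A_{n-1}(tp,q,p)$ should come from the case where the smallest element sits in the first part.

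\textbf{Alternative route.} If the direct manipulation is messy, we will instead multiply \eqref{eq:classical_rec} by $u^n/((q;q)_n(t;p)_{n+1})$ and sum over $n$. Writing $\Phi(t,u)$ for the left-hand side of \eqref{eqn:qpPoly:Sn2}, the factor $(pq)^j\qbinom{n-1}{j}$ corresponds to a $q$-convolution in $u$, with $u$ replaced by $upq$ in one factor. The aim is a functional equation of the form
\[
\Phi(t,u)-\Phi(t,uq)=u\,(\text{expression in }\Phi(t,upq)\text{ and }\Phi(tp,u\cdot)).
\]
We then verify that $\sum_m t^m F_m(u)$ satisfies the same equation. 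That verification uses only
\[
F_m(u)-F_m(uq)=F_m(u)\bigl(1-(1-u)(1-up)\cdots(1-up^m)\bigr),
\]
and the splitting $F_m(u)=(u;q)_\infty^{-1}F_{m-1}(up)$.

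\textbf{Main obstacle.} The hardest step is matching the weights. The $n$-dependent denominator $(t;p)_{n+1}$ mixes with the shift $t\mapsto tp^{j+1}$, and the $q$-difference operator acts on the $u$-variable. Because of this, the convolution does not become a plain product of series. We would first test the identity on $n=2$:
\[
A_2=1+tpq.
\]
We would then handle the issue by working coefficientwise in $t^m$, where $(t;p)_{n+1}$ is absorbed into $h_{n,m}$. This reduces the claim to a finite $q$-multinomial identity for $h_{n,m}$, which we prove by induction on $n$.
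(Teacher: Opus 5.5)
\textbf{Verdict: genuine gap.} Your skeleton is the paper's. You divide by $(t;p)_{n+1}$; your factorization $(t;p)_{j+1}(tp^{j+1};p)_{n-j}=(t;p)_{n+1}$ is exactly what turns \eqref{eq:classical_rec} into a recurrence for $B_n=A_n/(t;p)_{n+1}$. You then pass to coefficients of $t^m$ and aim to show the product side obeys the same recurrence. But the proposal stops where the proof's content lies: you never verify the recurrence that the $h_{n,m}$ (or their $u$-generating functions) must satisfy.

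Your $\Phi(t,u)$ route does not close up, because summing the recurrence gives
\[
\Phi(t,u)-\Phi(t,uq)=\frac{u}{1-t}\,\Phi(tp,u)+tu\sum_{j\ge1}\frac{(upq)^jB_j(t)}{(q;q)_j}\,\Phi(tp^{j+1},u),
\]
which has a $j$-dependent shift in $t$. Your direct route is deferred to ``a finite $q$-multinomial identity for $h_{n,m}$'' that is never written down. What would have to be checked is
\[
h_{n,m}=\sum_{b=0}^m p^bh_{n-1,b}+\sum_{j=1}^{n-1}(pq)^j\qbinom{n-1}{j}\sum_{b=0}^{m-1}p^{(j+1)b}h_{j,m-1-b}\,h_{n-1-j,b}\qquad(n\ge2).
\]
Deriving this from the multinomial sums via Pascal-type relations is not routine, and the proposal gives no indication of how it would go.

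\textbf{The missing step.} After extracting $t^m$, the convolution becomes a plain product of $u$-series. This is because $(pq)^jp^{(j+1)b}=p^b(p^{b+1}q)^j$ and $\qbinom{n-1}{j}/(q;q)_{n-1}=1/\bigl((q;q)_j(q;q)_{n-1-j}\bigr)$. Let $B_{n,m}$ be the coefficient of $t^m$ in $B_n$ and $f_m(u)=\sum_n B_{n,m}u^n/(q;q)_n$. Then
\[
f_m(u)-f_m(uq)=up^mf_m(u)+\sum_{b=0}^{m-1}up^b\,f_b(u)\,f_{m-1-b}(up^{b+1}q),\qquad f_0(u)=\frac{1}{(u;q)_\infty}.
\]
For $g_m=\prod_{i=0}^m(up^i;q)_\infty^{-1}$, the relations you need are not your splitting $F_m(u)=(u;q)_\infty^{-1}F_{m-1}(up)$. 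They are
\[
g_m(uq)=g_m(u)\prod_{i=0}^m(1-up^i),\qquad g_b(u)\,g_{m-1-b}(up^{b+1}q)=g_m(u)\prod_{i=b+1}^m(1-up^i).
\]
These reduce everything to the telescoping identity $1=\prod_{i=0}^m(1-up^i)+\sum_{b=0}^m up^b\prod_{i=b+1}^m(1-up^i)$.

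Uniqueness is then immediate, either by induction on $n$ as you intend, or by induction on $m$ via the coefficient form $c_n(1-q^n)=p^mc_{n-1}+\ell_n$, with $c_0=1$ and $\ell_n$ depending only on $f_0,\ldots,f_{m-1}$. Without this step, or an explicit substitute for it, what you have is a correct plan rather than a proof.
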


Here and throughout, the $q$-shifted factorial is defined by
\[
\begin{aligned}
(u;q)_n &:=
  \begin{cases}
    1, & \text{if } n = 0, \\
    (1-u)(1-uq) \cdots (1-uq^{n-1}), & \text{if } n \ge 1,
  \end{cases} \\
(u;q)_\infty &:= \lim_{n \to \infty} (u;q)_n
  = \prod_{n \ge 0} (1 - uq^n).
\end{aligned}
\]

\medskip

In contrast, our approach does not currently yield generating functions for the corresponding trivariate statistics on Andr\'e permutations by means of the recurrences in \eqref{eq:AndreI} and \eqref{eq:AndreII}. 
An open problem is to derive such generating functions through alternative methodologies that have not yet been fully developed.

\section{Recurrence formulas}\label{sec:Sym}

In this section, we investigate trivariate generating functions for classical permutations, André I permutations, André II permutations, and Simsun permutations, and complete the proof of Theorem \ref{thm:classical_rec}.

All four recurrence relations derived in this paper originate from a unified combinatorial decomposition strategy: factor a permutation at the location of its minimal entry $1$, which yields the canonical decomposition map $\Psi$ defined below.

\begin{Definition}[Decomposition map $\Psi$]
Let $\sigma=\sigma_1\cdots \sigma_n \in \Sym_n$, and suppose   $\sigma_{j+1}=1$. Define the left segment $\sigma^L=\sigma_1\cdots \sigma_j$ and right segment $\sigma^R=\sigma_{j+2}\cdots \sigma_n$. The map $\Psi$ sends $\sigma$ to the triple $\Psi(\sigma)=(L,R,w)$, where:
\begin{itemize}
\item $L\in\Sym_j$ is the order-preserving reduction of the letters of $\sigma^L$ onto the alphabet $\{1,2,\dots,j\}$;
\item $R\in\Sym_{n-j-1}$ is the order-preserving reduction of the letters of $\sigma^R$ onto the alphabet $\{1,2,\dots,n-j-1\}$;
\item $w=w_1w_2\cdots w_n$ is a binary $(0,1)$-word of length $n$ specified by
\[
w_i=
\begin{cases}
0, & \text{if  } i \in \sigma^L,\\
1, & \text{otherwise}.
\end{cases}
\]
\end{itemize}
\end{Definition}

\begin{Example}
For $\sigma=6\,9\,2\,5\,8\,1\,4\,7\,3\in\Sym_9$, we have $\Psi(\sigma)=(L,R,w)$ with $L=3\,5\,1\,2\,4$, $R=2\,3\,1$, and binary string $w=1\,0\,1\,1\,0\,0\,1\,0\,0$.
\end{Example}

Given $\sigma\in\Sym_n$ with $\sigma_{j+1}=1$ and associated decomposition triple $\Psi(\sigma)=(L,R,w)$, it is not difficult to see that $\Psi$ is reversible and the following relations are satisfied: 
\begin{align}
\des(\sigma) &= \des(L) + \des(R) + 1-\delta_{j,0}, \label{eq:dec-des}\\
\maj(\sigma) &= \maj(L) + \maj(R) + (j+1)\des(R) + j, \label{eq:dec-maj}\\
\inv(\sigma) &= \inv(L) + \inv(R) + \inv(w), \label{eq:dec-inv}
\end{align}
where $\delta_{j,0}$ denotes the Kronecker delta ($\delta_{j,0}=1$ if $j=0$, and $\delta_{j,0}=0$ otherwise).

 We first recall a result due to MacMahon \cite{macmahon1960}. For $n \ge j \ge 0$, let $\mathcal{M}(j,n-j)$ be the set of $(0,1)$-sequences of length $n$ consisting of $j$ copies of $0$'s and $n-j$ copies of $1$'s. The following well-known result is due to MacMahon (see \cite[Chapter 3.4]{Andrews-1976}). 
For $n \ge j \ge 0$,
\begin{align}\label{MacMahona}
{n\brack j}_q &= \sum_{w\in \mathcal{M}(j,n-j)} q^{\operatorname{inv}(w)}.
\end{align}
In particular, let $\mathcal{M}_{1}(j,n-j)$ be the set of $(0,1)$-sequences $w=w_1w_2\cdots w_{n} \in \mathcal{M}(j,n-j) $ such that $w_{1}=1$. Then 
\begin{align}\label{MacMahonb}
q^{j}{n-1\brack j}_q &= \sum_{w\in \mathcal{M}_{1}(j,n-j)} q^{\operatorname{inv}(w)}.
\end{align}
Let $\mathcal{M}_{12}(j,n-j)$ be the set of $(0,1)$-sequences $w=w_1w_2\cdots w_{n} \in \mathcal{M}(j,n-j) $ such that $w_{1}=1$ and $w_{2}=1$. Then 
\begin{align}\label{MacMahonc}
q^{2j}{n-2\brack j}_q &= \sum_{w\in \mathcal{M}_{12}(j,n-j)} q^{\operatorname{inv}(w)}.
\end{align}
Let $\mathcal{M}_{1n}(j,n-j)$ be the set of $(0,1)$-sequences $w=w_1w_2\cdots w_{n} \in \mathcal{M}(j,n-j) $ such that $w_{1}=1$ and $w_{n}=1$. Then 
\begin{align}\label{MacMahond}
q^j{n-2\brack j}_q &= \sum_{w\in \mathcal{M}_{1n}(j,n-j)} q^{\operatorname{inv}(w)}.
\end{align}

\begin{Proposition}\label{mainprop}  For $n> j\geq 0$, 
let  $\Sym_{n,j}$ (resp. $\AndI_{n,j}$, $\AndII_{n,j}$, $\RS_{n,j}$)  denote the set of permutations $\sigma=\sigma_1\cdots \sigma_n $ in $\Sym_n$ (resp. $\AndI_{n}$, $\AndII_{n}$, $\RS_{n}$) such that $\sigma_{j+1}=1$. We have 
\begin{itemize}
\item[(a).] The decomposition map $\Psi$ is a bijection between the set $\Sym_{n,j}$ and the set $\Sym_j \times \Sym_{n-j-1}\times \mathcal{M}_1(j, n-j)$.

\item[(b).] 
The decomposition map $\Psi$ is a bijection between the set $\AndI_{n,j}$ and the set $\AndI_j \times \AndI_{n-j-1}\times \mathcal{M}_{1n}(j, n-j)$. 

\item[(c).] The decomposition map $\Psi$ is a bijection between the set $\AndII_{n,j}$ and the set $\AndII_j \times \AndII_{n-j-1}\times \mathcal{M}_{12}(j, n-j)$. 

\item[(d).] The decomposition map $\Psi$ is a bijection between the set $\RS_{n,j}$ and the set $\AndII_j \times \RS_{n-j-1}\times \mathcal{M}_{1}(j, n-j)$. 

\end{itemize}
\end{Proposition}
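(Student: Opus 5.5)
The proof rests on one observation about $\Psi$. The letters of $\sigma^L$ and of $\sigma^R$ are recorded by their relative order in $L$ and $R$. The word $w$ records which values go to the left of $1$ and which go to the right. Hence $\sigma$ can be rebuilt from $(L,R,w)$: the values $\{i: w_i=0\}$ are placed on the left in the pattern $L$, then $1$, then the values $\{i:w_i=1,\ i\ne 1\}$ on the right in the pattern $R$. Since the value $1$ itself is never in $\sigma^L$, we always have $w_1=1$, and $w$ has exactly $j$ zeros. So $\Psi$ is injective on $\Sym_{n,j}$ with image contained in $\Sym_j\times\Sym_{n-j-1}\times\mathcal{M}_1(j,n-j)$. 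Conversely, every triple in this set is hit by the reconstruction just described, which proves (a).

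For (b)--(d) I restrict this bijection. Two facts are used throughout. First, Andr\'e and Simsun membership depend only on relative order, so $\sigma^L$ belongs to a given class if and only if $L$ does, and likewise for $\sigma^R$ and $R$. Second, a condition on where a particular value sits translates into a condition on one letter of $w$.

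For (b), the recursive definition with $\min(\sigma)=1$, $\tau=\sigma^L$, $\tau'=\sigma^R$ says $\sigma\in\AndI_n$ if and only if $L\in\AndI_j$, $R\in\AndI_{n-j-1}$, and the maximum $n$ lies in $\tau'$, that is, $w_n=1$. This gives $\mathcal{M}_{1n}$. The degenerate cases are consistent: for $j=n-1$ the set $\mathcal{M}_{1n}(n-1,1)$ is empty when $n\ge2$, as it should be, and for $j=0$ the condition holds automatically.

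For (c), the same argument applies with the minimum of $\tau\tau'$, which is the value $2$, required to lie in $\tau'$. This is exactly the condition $w_2=1$, giving $\mathcal{M}_{12}$; here $n\ge 2$.

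Part (d) is the real content. I will first reduce the Simsun condition for $\sigma=\tau\,1\,\tau'$ to separate conditions on $\tau$ and $\tau'$. For $k\le n$, let $\sigma|_{[k]}$ denote $\sigma$ with the values larger than $k$ deleted, so $\sigma|_{[k]}=\tau|_{[k]}\,1\,\tau'|_{[k]}$. A double descent $a>b>c$ in consecutive positions cannot straddle the $1$ with the $1$ as its first or middle letter, since $1$ is followed by an ascent. So it lies inside $\tau|_{[k]}$, or inside $\tau'|_{[k]}$, or it consists of the last two letters of $\tau|_{[k]}$ followed by $1$. Therefore $\sigma$ is Simsun if and only if
\begin{itemize}
\item $\tau'$ is Simsun (the factor $\tau'|_{[k]}$ is preceded by $1$, which creates no new descent), and
\item for every $k$, the word $\tau|_{[k]}$ has no double descent and does not end with a descent.
\end{itemize}
Both conditions are order-invariant, so they transfer to $R$ and $L$. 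Moreover, $w$ is now unrestricted beyond $w_1=1$, which gives $\mathcal{M}_1(j,n-j)$.

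The remaining step is a lemma: a permutation $\pi\in\Sym_j$ belongs to $\AndII_j$ if and only if, for every $k$, $\pi|_{[k]}$ has no double descents and no final descent. I expect this to be the main obstacle. I would prove it by induction on $j$, writing $\pi=\alpha\,1\,\beta$.
\begin{itemize}
\item The restrictions of $\pi$ are $\alpha|_{[k]}\,1\,\beta|_{[k]}$, so by the same straddling analysis the condition on $\pi$ is equivalent to the condition holding for $\alpha$ (with the requirement that $\alpha|_{[k]}$ never ends in a descent) together with a condition on $\beta$ and on how $\beta$ ends.
\item The extra clause in the definition of $\AndII$ (the value $2$ lies in $\beta$) must be matched with "no final descent and no double descent at the junction".
\end{itemize}
If the direct induction stalls, I would instead use the known characterization of Andr\'e permutations (Foata--Sch\"utzenberger / Hetyei style: no double descents and no final descent, preserved under successive deletion of extremal letters). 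The plan is then to check carefully that deleting largest values corresponds to the second kind, after possibly applying the complement/reversal symmetry linking the two kinds. The paper's use of $\AndII_j$ in (d) fixes which kind must come out, and that is what the verification has to confirm.
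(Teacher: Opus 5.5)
\textbf{Verdict.} There is a genuine gap in part (d): the key lemma is stated correctly but not proved, and your sketch omits the step that actually produces the Andr\'e~II clause.

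\textbf{What is fine.} Your treatment of (a)--(c) is correct. So is your reduction of the Simsun condition for $\sigma=\tau\,1\,\tau'$ to two conditions: $\tau'$ is Simsun, and every $\tau|_{[k]}$ has no double descent and no final descent. This coincides with the paper's argument.

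\textbf{The gap.} The unproved lemma is that this last property characterizes $\AndII_j$. You hedge about which kind of Andr\'e permutation comes out, and you offer a fallback via a characterization from the literature. Since the paper defines $\AndII$ recursively, that equivalence would itself have to be verified, so the fallback does not close the gap.

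\textbf{The missing observation.} It concerns the final letter of $\pi|_{[k]}=\alpha|_{[k]}\,1\,\beta|_{[k]}$.
\begin{itemize}
\item When $\beta|_{[k]}\neq\emptyset$, ``no final descent'' for $\pi|_{[k]}$ is equivalent to the same condition for $\beta|_{[k]}$. A one-letter $\beta|_{[k]}=b$ gives the ascent $1b$.
\item When $\beta|_{[k]}=\emptyset$, the word $\pi|_{[k]}=\alpha|_{[k]}\,1$ ends in a descent unless $\alpha|_{[k]}=\emptyset$.
\end{itemize}
Combine this with your straddling analysis, where a double descent through the junction means $\alpha|_{[k]}$ ends in a descent. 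The property for $\pi$ is then equivalent to three conditions:
\begin{itemize}
\item the property for $\alpha$;
\item the property for $\beta$;
\item $\alpha|_{[k]}\neq\emptyset\Rightarrow\beta|_{[k]}\neq\emptyset$ for every $k$.
\end{itemize}
The third condition is equivalent to $\min(\alpha\beta)\in\beta$. If the minimum $m$ lies in $\alpha$, taking $k=m$ violates it. Conversely, if $m\in\beta$, any $k$ with $\alpha|_{[k]}\neq\emptyset$ satisfies $k>m$, so $m\in\beta|_{[k]}$. Induction on length then matches the recursive definition of $\AndII$ exactly.

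\textbf{A misattribution in your sketch.} The Andr\'e~II clause (the value $2$ lies in $\beta$) corresponds to the final-descent condition in the case $\beta|_{[k]}=\emptyset$. It does not correspond to the ``double descent at the junction'' you pair it with; that condition is absorbed into the property for $\alpha$.
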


\begin{proof}
Assertions (a)--(c) follow directly from the definition of $\Psi$ and the respective combinatorial characterizations of classical, Andr\'e I, and Andr\'e II permutations. We focus the remainder of the proof on part (d).

Recall the standard characterization of Simsun permutations: $\sigma\in\Sym_n$ is Simsun ($\sigma\in\RS_n$) if and only if, for every integer $1\le k\le n$, the subsequence $\sigma^{(k)}$ formed by all entries of $\sigma$ bounded above by $k$ (preserving original positional order) contains no double descent. This successive-truncation characterization is equivalent to the defining property of $\RS_n$.

Existing literature establishes recursive constructions for Simsun permutations by inserting the \textit{maximal} element; see Sundaram \cite{sundaram1996plethysm} and Chow \& Shiu \cite{chow2011counting}. Our alternative recursion via the minimal element $1$ does not appear in prior references, so we supply a self-contained proof below.

Since $1$ is globally minimal, $1$ belongs to every truncated subsequence $\sigma^{(k)}$ for all $k\ge1$ and partitions each such subsequence as $\sigma^{(k)}=(L^{(k)},1,R^{(k)})$, where $L^{(k)}$ (resp. $R^{(k)}$) collects entries $\le k$ from the reduced left permutation $L$ (resp. reduced right permutation $R$). The subsequence $\sigma^{(k)}$ avoids double descents if and only if:
\begin{enumerate}
\item both $L^{(k)}$ and $R^{(k)}$ avoid double descents, and
\item $L^{(k)}$ does not terminate with a descent.
\end{enumerate}
Varying $k$ from $1$ to $n$, the family of constraints imposed on all $R^{(k)}$ is equivalent to $R\in\RS_{n-j-1}$. Meanwhile the full set of constraints on all $L^{(k)}$ condenses to the following property for $L$:
\begin{itemize}
\item[(P)] For every integer $k$, the subsequence of $L$ composed of entries $\le k$ has no double descent and does not end in a descent.
\end{itemize}
We now verify that a permutation satisfies property (P) if and only if it belongs to $\AndII$, proceeding by induction on permutation length $|w|$. Base cases $|w|\le1$ hold trivially.

Let $w=\alpha\mu\beta$ with $\mu=\min(w)$. Splitting each truncated subsequence $w^{(k)}=(\alpha^{(k)},\mu,\beta^{(k)})$, property (P) for $w$ decomposes into three separate requirements:
\begin{itemize}
\item[(i)] $\alpha$ satisfies property (P);
\item[(ii)] $\beta$ satisfies property (P);
\item[(iii)] for every $k$, nonemptiness of $\alpha^{(k)}$ forces nonemptiness of $\beta^{(k)}$.
\end{itemize}
Condition (iii) is equivalent to requiring the smallest element of $\alpha\cup\beta$ lies entirely inside $\beta$. By induction, (i) and (ii) imply $\alpha,\beta\in\AndII$. The combined conditions $\alpha,\beta\in\AndII$ plus $\min(\alpha\beta)\in\beta$ exactly match the standard recursive definition of second-kind André permutations, so $L\in\AndII_j$. This completes the proof of part (d).
\end{proof}

 With Proposition \ref{mainprop} in hand, we proceed to establish Theorem \ref{thm:classical_rec}.   

\medskip 
 \begin{proof}[Proof of Theorem \ref{thm:classical_rec}]
Parts (a)--(d) follow via uniform reasoning using Proposition \ref{mainprop} alongside identities \eqref{MacMahonb}--\eqref{MacMahond}; we present full details only for case~(a).

We classify $\sigma \in \Sym_n$ by the position $j+1$ of the entry $1$,  so $\sigma\in\Sym_{n,j}$ maps to $\Psi(\sigma)=(L,R,w)$.

\noindent\textbf{Case $j=0$:} Here $L=\emptyset$, the binary word $w$ is all-ones, and statistic simplifications from \eqref{eq:dec-des}--\eqref{eq:dec-inv} give
\begin{equation}
\sum_{\sigma\in \Sym_{n,0}} t^{\des(\sigma)} q^{\inv(\sigma)} p^{\maj(\sigma)}
=\sum_{R\in \Sym_{n-1}} t^{\des(R)} q^{\inv(R)} p^{\maj(R)+\des(R)}
=A_{n-1}(tp,q,p).
\end{equation}

\noindent\textbf{Case $j\ge1$:} Applying Proposition \ref{mainprop}(a) together with the three statistic decomposition formulas yields
\begin{align}
\sum_{\sigma\in \Sym_{n,j}} t^{\des(\sigma)} q^{\inv(\sigma)} p^{\maj(\sigma)}
&=tp^j\Bigl(\sum_{L \in \Sym_j}t^{\des(L)} q^{\inv(L)} p^{\maj(L)}\Bigr)\nonumber \\[4pt]
&\quad \times \Bigl(\sum_{R \in \Sym_{n-j-1}} t^{\des(R)} q^{\inv(R)} p^{\maj(R)+(j+1)\des(R)}\Bigr)\nonumber \\[4pt]
&\quad \times \Bigl(\sum_{ w \in \mathcal{M}_1(j, n-j)}q^{\inv(w)}\Bigr). \label{pfaaa}
\end{align}
Substitute  \eqref{MacMahonb}  into  \eqref{pfaaa},   we arrive at
\[
\sum_{\sigma\in \Sym_{n,j}} t^{\des(\sigma)} q^{\inv(\sigma)} p^{\maj(\sigma)}
=t(pq)^j{n-1 \brack j}_q\, A_j(t,q,p)\,A_{n-j-1}(tp^{j+1},q,p).
\] 
Adding the $j=0$ term and summing over $1\le j\le n-1$ gives \eqref{eq:classical_rec} in Theorem \ref{thm:classical_rec}(a).
\end{proof}

\section{Gessel's formula}

We now present the proof of the equivalence between the recurrence relation \eqref{eq:classical_rec} in Theorem \ref{thm:classical_rec} and the identity \eqref{eqn:qpPoly:Sn2}, which implies 
Theorem~\ref{thm:qpPoly:Sn2}. 

Let us define the normalized polynomial $B_n(t,q,p)$ as:
\begin{equation*}
    B_n(t,q,p)=\frac{A_n(t,q,p)}{(t;p)_{n+1}}.
\end{equation*}
Observe that the initial conditions are given by:
\begin{equation}\label{eqn:qpPoly:Snrea}
    B_0(t,q,p)=\frac{1}{1-t}, \quad B_1(t,q,p)=\frac{1}{(1-t)(1-tp)}.
\end{equation}
Substituting this transformation into the recurrence relation \eqref{eq:classical_rec} in Theorem~\ref{thm:classical_rec}, we obtain the following recurrence for $n\geq 2$:
\begin{align}\label{eqn:qpPoly:Snre}
    B_n(t,q,p) = & \frac{1}{1-t} B_{n-1}(tp,q,p) \nonumber \\
    & + t \sum_{j=1}^{n-1} (pq)^j \qbinom{n-1}{j} B_j(t,q,p) B_{n-1-j} (tp^{j+1},q,p).
\end{align}
Next, we expand $B_n(t,q,p)$ with respect to $t$:
\begin{equation*}
    B_n(t,q,p)=\sum_{m\geq 0} t^m B_{n,m}(q,p).
\end{equation*}
By equating coefficients of $t^m$ in \eqref{eqn:qpPoly:Snrea} and \eqref{eqn:qpPoly:Snre}, we obtain the initial values for the coefficients:
\begin{align}
    B_{0,m}(q,p) &= 1 \quad \text{for} \quad m\geq 0, \label{initval_B0}\\
    B_{1,m}(q,p) &= \frac{1-p^{m+1}}{1-p} \quad \text{for} \quad m\geq 0. \label{initval_B1}
\end{align}
For $n\geq 2$ and $m\geq 0$, the recurrence relation becomes:
\begin{align}\label{eqn:qpPoly:Snre3}
    B_{n,m}(q,p) &= \sum_{b=0}^m p^b B_{n-1,b}(q,p) \nonumber \\
    & \quad + \sum_{j=1}^{n-1} (pq)^j \qbinom{n-1}{j} \sum_{b=0}^{m-1} p^{(j+1)b} B_{j,m-b-1}(q,p) B_{n-1-j,b} (q,p).
\end{align}
According to Theorem \ref{thm:qpPoly:Sn2}, proving its main result is equivalent to proving:
\begin{equation}\label{eqn:qpPoly:Snre4}
    \sum_{n\geq 0}B_{n,m}(q,p)  \frac{u^n}{(q;q)_n} = \frac{1}{(u;q)_\infty (up;q)_\infty \cdots (up^m;q)_\infty}.
\end{equation}
Therefore, establishing the equivalence between Theorem \ref{thm:classical_rec} (a) and Theorem \ref{thm:qpPoly:Sn2} amounts to verifying the equivalence of \eqref{eqn:qpPoly:Snre3} and \eqref{eqn:qpPoly:Snre4}. To this end, we define the generating function $f_m$ and the product form $g_m$:
\begin{equation*}
    f_{m}(u;p,q) = \sum_{n\geq 0} B_{n,m}(q,p)\frac{u^n}{(q;q)_n},
\end{equation*}
\begin{equation*}
    g_{m}(u;p,q) = \frac{1}{(u;q)_\infty (up;q)_\infty \cdots (up^m;q)_\infty}.
\end{equation*}
We proceed to demonstrate that $f_{m}(u;p,q)$ and $g_{m}(u;p,q)$ satisfy the same recurrence relation and initial conditions.

\begin{Lemma} \label{lem:fm}
For $m\geq 1$, the function $f_m(u;p,q)$ satisfies the recurrence:
\begin{align} \label{recurra}
    f_{m}(u;p,q) &= f_{m}(uq;p,q) + up^m f_m(u;p,q) \nonumber \\
    & \quad +\sum_{b=0}^{m-1} up^b f_{b}(u;p,q) f_{m-1-b}(up^{b+1}q;p,q),
\end{align}
with the initial condition
\begin{equation} \label{recurrb}
    f_{0}(u;p,q)=\frac{1}{(u;q)_\infty}.
\end{equation}
\end{Lemma}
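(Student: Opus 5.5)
The plan is to turn the coefficient recurrence \eqref{eqn:qpPoly:Snre3} into a functional equation for $f_m$. The basic tool is the $q$-difference operator. For any series $F(u)=\sum_n c_n u^n/(q;q)_n$ we have
\[
F(u)-F(uq)=\sum_{n\ge1} c_n \frac{u^n}{(q;q)_{n-1}} = u\sum_{n\ge 0} c_{n+1}\frac{u^n}{(q;q)_n}.
\]
In addition, since $(q;q)_{n-1}=(q;q)_j(q;q)_{n-1-j}\big/\qbinom{n-1}{j}$, a $q$-binomial convolution $\sum_j \qbinom{n-1}{j}a_jb_{n-1-j}$ corresponds to the product of the two series in $u^{n-1}/(q;q)_{n-1}$. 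We also have $q^j u^j=(uq)^j$ and $p^{(j+1)b}u^{j}=p^b(up^b)^j$, so the factors $(pq)^j p^{(j+1)b}$ can be absorbed into rescaling the argument.

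The steps are as follows.

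\textbf{Step 1.} Multiply \eqref{eqn:qpPoly:Snre3} (with $n$ replaced by $n+1$) by $u^{n+1}/(q;q)_n$ and sum over $n\ge 1$. By the identity above, the left side becomes $f_m(u)-f_m(uq)$ minus the $n=0,1$ boundary contributions.

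\textbf{Step 2.} The first sum on the right becomes $u\sum_{b=0}^m p^b f_b(u)$, up to boundary terms.

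\textbf{Step 3.} The double sum becomes $u\sum_{b=0}^{m-1}p^b\,\bigl(f_{m-1-b}(up^bpq)-1\bigr)\,f_b(up^{b+1}q)$, again up to boundary terms. The index assignment needs care here: the factor carrying $j$, namely $B_{j,m-b-1}$, receives the argument scaling $up^{b+1}q$ from $(pq)^jp^{(j+1)b}$. The restriction $1\le j$ removes the $j=0$ term, and the $j=n$ term is absent. After reindexing $b\to m-1-b$, this should match $\sum_b up^b f_b(u) f_{m-1-b}(up^{b+1}q)$ minus $\sum_b u p^b f_b(u)$, where the subtracted sum corresponds to the removed $j=0$ term.

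\textbf{Step 4.} Combine everything. The extra $\sum_{b<m} u p^b f_b(u)$ from Step 2 should cancel against the removed-$j=0$ correction from Step 3, leaving exactly $up^mf_m(u)$ plus the convolution sum in \eqref{recurra}.

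\textbf{Step 5.} Check the boundary terms separately using \eqref{initval_B0}--\eqref{initval_B1}. With $B_{0,m}=1$ and $B_{1,m}=\sum_{b\le m}p^b$, the $n=0$ and $n=1$ coefficients of both sides agree. This is why \eqref{eqn:qpPoly:Snre3} being stated only for $n\ge2$ causes no trouble.

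\textbf{Step 6.} For $f_0$: at $m=0$ the double sum is empty, so $B_{n,0}=B_{n-1,0}$, and hence $B_{n,0}=1$ for all $n$. Then $f_0=\sum_n u^n/(q;q)_n=1/(u;q)_\infty$ by Euler's identity.

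The main obstacle is the bookkeeping in Step 3: matching the order of arguments in the product (which of $f_b$ and $f_{m-1-b}$ receives the scaled argument $up^{b+1}q$) and accounting precisely for the missing $j=0$ term, and for the $j=n-1$ term with $B_{0,\cdot}=1$. My first attempt would be to swap summation order and substitute $b\mapsto m-1-b$, then verify the result on the $u^1$ and $u^2$ coefficients for $m=1$ as a sanity check.
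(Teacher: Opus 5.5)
Your plan is the paper's proof. You apply $F(u)-F(uq)$, split off the $n=1$ term, and turn the $q$-binomial convolution into a product via $\qbinom{n-1}{j}/(q;q)_{n-1}=1/((q;q)_j(q;q)_{n-1-j})$. You then absorb $(pq)^jp^{(j+1)b}u^j=p^b(up^{b+1}q)^j$, and the $-1$ corrections cancel against $B_{1,m}u=u\sum_{b\le m}p^b$ and against the $b<m$ part of Step 2.

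The one thing to repair is the displayed formula in Step 3. Only $B_{j,m-1-b}$ receives the rescaled argument; $B_{n-1-j,b}$ keeps $u^{n-1-j}$. So the double sum is $u\sum_{b=0}^{m-1}p^b f_b(u)\,(f_{m-1-b}(up^{b+1}q)-1)$, with $f_b(u)$ and not $f_b(up^{b+1}q)$. This is already the required form. The reindexing $b\mapsto m-1-b$ is unnecessary, and it cannot fix your version anyway, since it does not change the value of the sum.

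Also keep the top term of the $j$-sum, where $B_{0,b}=1$. It supplies the constant term of $f_b(u)$. The only missing term is $j=0$, and it alone produces the $-1$.
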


\begin{proof} 
By definition, $B_{n,0}(q,p)=1$ for all $n\geq 0$. The initial condition \eqref{recurrb} is immediate from the following  Euler's identity 
\[\sum_{n\geq 0} \frac{z^n}{(q;q)_n} = \frac{1}{(z;q)_\infty}.\] 

We next show the recurrence relation \eqref{recurra}. Observe that
\begin{align*}
    f_m(u;p,q) - f_m(uq;p,q)
    &= \sum_{n\geq 0} B_{n,m}(q,p) \frac{u^n}{(q;q)_n} - \sum_{n\geq 0} B_{n,m}(q,p) \frac{u^n q^n}{(q;q)_n} \\
    &= \sum_{n\geq 1} B_{n,m}(q,p) \frac{u^n }{(q;q)_{n-1}}.
\end{align*}
Using \eqref{initval_B1}, \eqref{eqn:qpPoly:Snre3}
and separating the $n=1$ term, we have:
\begin{align*}
    & u\frac{1-p^{m+1}}{1-p} + \sum_{n\geq 2} B_{n,m}(q,p) \frac{ u^n}{(q;q)_{n-1}} \\
    & = u\frac{1-p^{m+1}}{1-p} + \sum_{n\geq 2} \left(\sum_{b=0}^m p^b B_{n-1,b}(q,p)\right) \frac{u^n}{(q;q)_{n-1}} \\
    & \quad + \sum_{n\geq 2} \left(\sum_{j=1}^{n-1} (pq)^j \qbinom{n-1}{j} \sum_{b=0}^{m-1} p^{(j+1)b} B_{j,m-b-1}(q,p) B_{n-1-j,b} (q,p)\right)\frac{u^n}{(q;q)_{n-1}}.
\end{align*}
We split the summation into two parts corresponding to the two terms in the recurrence. The first part becomes:
\begin{equation*}
    \sum_{b=0}^m p^b \left(\sum_{n\geq 2} B_{n-1,b}(q,p)\frac{u^n}{(q;q)_{n-1}}\right) = \sum_{b=0}^m u p^b \left(f_{b}(u;p,q)-1\right).
\end{equation*}
The second part, involving the convolution, becomes:
\begin{align*}
    & \sum_{b=0}^{m-1} u p^b \sum_{j\geq 1} B_{j,m-b-1}(q,p) \frac{(u p^{b+1}q)^{j}}{(q;q)_j} \left(\sum_{n\geq j+1} B_{n-j-1,b} (q,p)\frac{u^{n-j-1}}{(q;q)_{n-j-1}}\right) \\
    &= \sum_{b=0}^{m-1} u p^b (f_{m-b-1}(up^{b+1}q;p,q)-1) f_{b}(u;p,q).
\end{align*}
Combining these, the terms involving $-1$ cancel out with the initial term $u(1-p^{m+1})/(1-p)$, yielding:
\begin{equation*}
    u p^m f_m(u;p,q) + \sum_{b=0}^{m-1} u p^b f_{b}(u;p,q) f_{m-1-b}(uqp^{b+1};p,q),
\end{equation*}
as desired. This completes the proof.
\end{proof}

By definition, it is easy to see that $g_{0}(u;p,q)=1/(u;q)_\infty$. It remains to show that $g_{m}(u;p,q)$ satisfies the same recurrence relation as $f_{m}(u;p,q)$.

\begin{Lemma} \label{lem:gm}
For $m\geq 1$,
\begin{align}\label{recurlab}
    g_{m}(u;p,q) &= g_{m}(uq;p,q) + u p^m g_m(u;p,q) \nonumber \\
    & \quad + \sum_{b=0}^{m-1} u p^b g_{b}(u;p,q) g_{m-1-b}(up^{b+1}q;p,q).
\end{align}
\end{Lemma}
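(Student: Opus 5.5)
The plan is to verify \eqref{recurlab} directly from the product form of $g_m$. After dividing by $g_m(u;p,q)$, every term should become a finite polynomial in $u$, and the identity should reduce to a telescoping sum.

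\emph{Step 1: shifts of the argument.} Since $(uq;q)_\infty=(u;q)_\infty/(1-u)$, replacing $u$ by $uq$ in $g_m$ gives
\[
g_m(uq;p,q)=g_m(u;p,q)\prod_{i=0}^{m}(1-up^i).
\]
In the same way,
\[
g_{m-1-b}(up^{b+1}q;p,q)=\prod_{i=0}^{m-1-b}\frac{1}{(up^{b+1+i}q;q)_\infty}=\prod_{i=b+1}^{m}\frac{1-up^i}{(up^i;q)_\infty}.
\]
Multiplying by $g_b(u;p,q)=\prod_{i=0}^{b}1/(up^i;q)_\infty$, the factors combine into the full product $g_m$, and we obtain
\[
g_b(u;p,q)\,g_{m-1-b}(up^{b+1}q;p,q)=g_m(u;p,q)\prod_{i=b+1}^{m}(1-up^i).
\]
This index bookkeeping is the only step needing care: one must check that the ranges $0\le i\le b$ and $b+1\le i\le m$ together cover exactly $0,\dots,m$.

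\emph{Step 2: reduction to a polynomial identity.} Write $x_i=up^i$ and $P_k=\prod_{i=k}^{m}(1-x_i)$, with $P_{m+1}=1$. Dividing \eqref{recurlab} by $g_m(u;p,q)$, it becomes
\[
1=P_0+x_mP_{m+1}+\sum_{b=0}^{m-1}x_bP_{b+1}=P_0+\sum_{b=0}^{m}x_bP_{b+1}.
\]
Here the isolated term $up^m g_m(u;p,q)$ is exactly the $b=m$ summand $x_mP_{m+1}$.

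\emph{Step 3: telescoping.} Since $P_b=(1-x_b)P_{b+1}$, each summand satisfies $x_bP_{b+1}=P_{b+1}-P_b$. Hence
\[
\sum_{b=0}^{m}x_bP_{b+1}=P_{m+1}-P_0=1-P_0,
\]
so the right-hand side equals $P_0+(1-P_0)=1$, which proves \eqref{recurlab}.

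With $g_0=f_0$ from Lemma~\ref{lem:fm}, both families then satisfy the same recurrence. This recurrence determines $f_m$ uniquely from $f_0,\dots,f_{m-1}$: comparing coefficients of $u^n$ gives $(1-q^n)[u^n]f_m$ in terms of $[u^{n-1}]f_m$ and lower-order data. It follows that $f_m=g_m$, which is Theorem~\ref{thm:qpPoly:Sn2}.
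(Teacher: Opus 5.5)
Your proof is correct and follows the paper's route: you write $g_m(uq)$ and $g_b(u)\,g_{m-1-b}(up^{b+1}q)$ as $g_m(u)$ times finite products, divide by $g_m$, and check the resulting polynomial identity, which you prove by telescoping where the paper only calls it a known algebraic identity. In your closing uniqueness remark, note that the recurrence says nothing about the $u^0$ coefficient (since $1-q^0=0$), so you also need the initial value $f_m(0)=g_m(0)=1$, as the paper states.
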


\begin{proof} 
We observe the following relations:
\begin{equation*}
    g_{m}(uq;p,q) = (1-u)(1-up)\cdots (1-up^m) g_m(u;p,q),
\end{equation*}
and for the summation terms:
\begin{align*}
    & u p^b g_{b}(u;p,q) g_{m-1-b}(up^{b+1}q;p,q) \\
    &= u p^b \frac{1}{(u;q)_\infty (up;q)_\infty \cdots (up^b;q)_\infty} \\
    & \quad \times \frac{(1-up^{b+1})(1-up^{b+2}) \cdots (1-up^{m})}{(up^{b+1};q)_\infty (up^{b+1}p;q)_\infty \cdots (up^{b+1}p^{m-1-b};q)_\infty} \\
    &= u p^b (1-up^{b+1})(1-up^{b+2}) \cdots (1-up^{m}) g_m(u;p,q).
\end{align*}
Thus, dividing \eqref{recurlab} by $g_m(u;p,q)$, the identity to be proven reduces to:
\begin{align*}
    1 &= (1-u)(1-up)\cdots (1-up^m) + u p^m \\
    & \quad + \sum_{b=0}^{m-1} u p^b (1-up^{b+1})(1-up^{b+2}) \cdots (1-up^{m}),
\end{align*}
which is a known algebraic identity. This confirms that $g_m$ satisfies the recurrence, establishing the lemma.
\end{proof}

Finally, $f_0 = g_0 = 1/(u;q)_\infty$, and by Lemmas~\ref{lem:fm} and~\ref{lem:gm} the functions $f_m$ and $g_m$ satisfy the same recurrence \eqref{recurra} for $m \ge 1$. Writing $f_m(u;p,q) = \sum_{n\ge 0} c_n u^n$, this recurrence reads $c_n(1-q^n) = p^m c_{n-1} + \ell_n$ for $n \ge 1$, where $\ell_n$ depends only on $f_0,\dots,f_{m-1}$; together with the value $f_m(0;p,q)=g_m(0;p,q)=1$ it determines $f_m$ uniquely from the lower-index functions, and likewise for $g_m$. By induction on $m$ we conclude $f_m = g_m$ for all $m$, which is \eqref{eqn:qpPoly:Snre4} and hence proves Theorem~\ref{thm:qpPoly:Sn2}.

\medskip

\medskip

\noindent{\bf Acknowledgment.} This work was supported by the National Natural Science Foundation of China.

\bibliographystyle{plain}

\end{document}